\DeclareRobustCommand{\ovl}[1]{%
	\mathpalette\do@cev{#1}%
}
\newcommand{\do@cev}[2]{%
	\fix@cev{#1}{+}%
	\reflectbox{$\m@th#1\ovr{\reflectbox{$\fix@cev{#1}{-}\m@th#1#2\fix@cev{#1}{+}$}}$}%
	\fix@cev{#1}{-}%
}
\newcommand{\fix@cev}[2]{%
	\ifx#1\displaystyle
	\mkern#23mu
	\else
	\ifx#1\textstyle
	\mkern#23mu
	\else
	\ifx#1\scriptstyle
	\mkern#22mu
	\else
	\mkern#22mu
	\fi
	\fi
	\fi
}
\newtheorem{theorem}{Theorem}[section]
\newtheorem{corollary}[theorem]{Corollary}
\newtheorem{conjecture}[theorem]{Conjecture}
\newtheorem{proposition}[theorem]{Proposition}
\newtheorem{question}[theorem]{Question}
\newtheorem*{thm:function}{Theorem~\ref{thm:function}} 
\date{}
\newcommand{\len}{\operatorname{length}}
\title{Two-block paths in oriented graphs of large semidegree} 
\author{Irena Penev\thanks{Computer Science Institute (I\'UUK, MFF), Charles University, Prague, Czech Republic. Supported by GA\v{C}R grant 25-17377S. \texttt{ipenev@iuuk.mff.cuni.cz}. } ~~ S Taruni\thanks{Centro de Modelamiento Matemático (CNRS IRL2807), Universidad de Chile, Santiago, Chile. Supported by Centro de Modelamiento Matemático (CMM) BASAL fund FB210005 for center of excellence from ANID-Chile and by MSCA-RISE-2020-101007705 project \textit{RandNET}. \texttt{tsridhar@cmm.uchile.cl}. } ~~ St\'ephan
Thomass\'e\thanks{Univ. Lyon, ENS de Lyon, UCBL, CNRS, LIP, France. \texttt{stephan.thomasse@ens-lyon.fr}.} ~~
Ana Trujillo-Negrete\thanks{Facultad de Ciencias, Universidad Nacional Autónoma de México, Mexico City, Mexico.  Supported by ANID/Fondecyt Postdoctorado 3220838, by ANID Basal Grant CMM FB210005, by MSCA-RISE-2020-101007705 project \textit{RandNET}, and by the Universidad Nacional Autónoma de México Postdoctoral Program (POSDOC). \texttt{ltrujillo@ciencias.unam.mx}. } ~~ Mykhaylo Tyomkyn\thanks{Department of Applied Mathematics (KAM, MFF), Charles University, Prague, Czech Republic. Supported by GA\v{C}R grant 25-17377S and ERC Synergy Grant DYNASNET 810115. \texttt{tyomkyn@kam.mff.cuni.cz}.}}
\begin{document}
	\maketitle

    \begin{abstract}
        We study the existence of oriented paths with two blocks in oriented graphs under semidegree conditions. A \textit{block} of an oriented path is a maximal directed subpath. Given positive integers  $k$  and  $\ell$  with  $k/2\le \ell < k$, we establish a semidegree function that guarantees the containment of every oriented path with two blocks of sizes  $\ell$  and  $k-\ell$. As a corollary, we show that every oriented graph with all in- and out-degrees at least $3k/4$ contains every two-block path with $k$ arcs. Our results extend previous work on Stein's conjecture and related problems concerning oriented paths. \\
        
\noindent {\it Keywords:} Oriented graph; minimum semidegree; two-block path; tree embedding.
    \end{abstract}

\section{Introduction}

The problem of determining the existence of long paths in a graph with minimum degree constraints is well-studied~\cite{dirac1952some,posa1976hamiltonian}.
In particular, Dirac~\cite{dirac1952some} proved that a minimum degree of at least ${n}/{2}$ guarantees the existence of a Hamilton cycle in a graph with $n$ vertices, while a minimum degree of at least ${(n-1)}/{2}$ ensures the containment of a Hamilton path. This problem naturally extends to oriented graphs and digraphs. An \textit{oriented graph} is a digraph that contains no directed cycles of length one or two. As a natural analogue to the minimum degree in graphs, the concept of minimum semidegree has been used in the study of oriented Hamilton cycles~\cite{ghouilahouri1960condition,haggkvist1993hamilton,haggkvist1995oriented,keevash2009exact,kelly2009arbitrary,kelly2008dirac}. The \textit{minimum semidegree} of a digraph $D$, denoted by $\delta^{0}(D)$, is defined as the minimum value among the in-degrees and out-degrees of all vertices in $D$.

A related problem involves finding paths of a given size in a graph. Erd\H{o}s and Gallai~\cite{gallai1959maximal} proved that a connected graph with at least $k+1$ vertices and minimum degree at least $k/2$ contains a $k$-edge path.
In oriented graphs Jackson~\cite{jackson} proved that $\delta^{0}(G) \ge k/2$ implies $G$ contains a directed path, that is a path where all arcs have the same direction, with $k$ arcs.
More recently, Stein~\cite{stein_conjecture} conjectured a generalization of Jackson’s theorem to all oriented paths. 

\begin{conjecture}[Stein,~\cite{stein_conjecture}]\label{conj:stein}
Every oriented graph $G$ with $\delta^0(G)>k/2$ contains every orientation of the $k$-edge path. 
\end{conjecture}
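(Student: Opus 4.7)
The conjecture is genuinely open in general, so any proof sketch is necessarily speculative; I will focus on the two-block case, which is the natural first step and the subject of this paper. Write the target two-block path as a directed path of length $\ell$ ending at an apex vertex $v$, followed by a directed path of length $k-\ell$ starting at $v$ and traversing arcs in the opposite orientation. Without loss of generality assume both blocks meet at $v$ with the first block ending at an in-arc of $v$ and the second block leaving $v$ along an in-arc as well (the other cases are symmetric).

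The plan is as follows. By Jackson's theorem, $G$ contains a directed path $Q$ of length $\ell$, since $\delta^0(G) > k/2 \ge \ell$; let $v$ be its terminal vertex. The task is then to construct the second block of length $k-\ell$ starting at $v$ using only vertices in $G - (V(Q) \setminus \{v\})$. Applying Jackson's theorem to the residual graph does not work directly, because deleting up to $\ell$ vertices can destroy the semidegree condition. To get around this I would use a P\'osa-style rotation-extension scheme on the second block: start with any maximal directed path $R$ from $v$ in the appropriate orientation inside $G - (V(Q) \setminus \{v\})$; if $\len(R) \ge k-\ell$ we are done, otherwise the endpoint of $R$ has all its relevant neighbors inside $V(Q) \cup V(R)$. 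Each such neighbor on $Q$ or $R$ enables a rotation that either frees a new vertex or strictly increases a suitable potential function (for instance, $\len(R)$ plus the number of free in-neighbors of its endpoint). With care, rotations of $Q$ can be composed with rotations of $R$, and the process must terminate in a configuration realising the two-block path.

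The main obstacle is the tightness of the semidegree budget: with $\delta^0(G) > k/2$, both blocks together use $k+1$ vertices, so the in- and out-neighborhoods of the apex and of the two endpoints overlap heavily with $V(Q) \cup V(R)$ when $\ell$ is close to $k/2$. A rotation of $Q$ that reroutes the first block may destroy the vertex disjointness with $R$, and vice versa, so the rotation scheme on $Q$ and on $R$ are not independent. This is presumably why the paper settles for the weaker bound $\delta^0(G) \ge 3k/4$: the extra slack in the semidegree allows the rotation-extension procedure to absorb collisions between $Q$ and $R$ without a finer stability analysis.

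Achieving the sharp bound $>k/2$, even in the two-block case, appears to require either an absorption argument (reserving in advance a small flexible structure that can accommodate the apex and rearrange the two blocks locally) or a detailed extremal analysis of the tight case, classifying graphs with $\delta^0(G)$ just above $k/2$ that come close to containing no two-block path of sizes $(\ell, k-\ell)$. I would expect such a stability/extremal argument to be the truly hard step, and I would not be able to push the plan above below the $3k/4$ threshold without it.
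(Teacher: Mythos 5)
The statement you were asked to prove is Stein's conjecture, which the paper does \emph{not} prove --- it is open --- and you correctly noted this, so your sketch is really a proposal for the two-block special case, i.e.\ for (a sharpening of) Theorem~\ref{thm:function}. Your plan is to build the $\ell$-block first as a directed path $Q$ ending at the apex, and then grow the $(k-\ell)$-block from the apex in the residual graph, repairing failures with a P\'osa-style rotation--extension scheme. This differs fundamentally from the paper's argument. The paper takes a \emph{single} maximum-length directed path $P=v_0v_1\dots v_t$ in $G$; by Jackson's theorem $t\ge 2\delta^0(G)$, so $P$ is already long enough to host both blocks. It then ``folds'' $P$ into the two-block path via one chord: if $v_0$ has an in-neighbour $v_i$ on $P$ in a suitable position (or symmetrically $v_t$ has a well-placed out-neighbour), then $P_1 := v_iv_0\xrightarrow[P]{}v_{i-1}$ and $P_2 := v_i\xrightarrow[P]{}v_t$ are two directed paths sharing the initial vertex $v_i$, and $\overleftarrow{P_1}P_2$ contains $P(\overleftarrow{\ell},\overrightarrow{k-\ell})$ once one checks the lengths. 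Since both blocks live inside $V(P)$, the vertex-disjointness collisions you rightly flag as the main obstruction to your rotation scheme simply never arise.

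The substance of the paper is then a case analysis (Proposition~\ref{prop:special_configurations} together with the proof of Theorem~\ref{thm:function}) on where $N^-(v_0)$ and $N^+(v_t)$ fall within a partition of $V(P)$ into segments $X$, $Y$, $Z$, augmented by a few auxiliary constructions --- rerouting through a short directed path outside $P$, or exhibiting a Hamilton cycle of $G[V(P)]$ --- for the cases where the direct fold is too short. The resulting semidegree bound is a function of $\ell$ that dips to $2k/3$ when $\ell>2k/3$; the value $3k/4$ is just the worst case of this function, not slack reserved to absorb rotation collisions as you speculated. The key idea missing from your proposal is the single-path fold, which dissolves the coupling problem rather than managing it. As for your own plan: it is not carried out, and, as you yourself observe, it stalls precisely on the interaction between rotations on $Q$ and on $R$, so in its current form it does not establish the two-block result at \emph{any} threshold, let alone at $k/2$.
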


Note that Conjecture~\ref{conj:stein} becomes straightforward when the minimum semidegree bound is strengthened to $\delta^0(G) \ge k$, as a simple greedy embedding succeeds in all cases. However, the problem becomes significantly more challenging when working with weaker bounds. Thus, in the pursuit of proving Stein's conjecture, the study of certain orientations of the path offers a direction for further exploration. As noted above, Jackson's result~\cite{jackson} confirms Conjecture~\ref{conj:stein} for the case of directed paths. Moreover, Stein and Trujillo-Negrete~\cite{stein-trees} confirmed Conjecture~\ref{conj:stein}, as a consequence of a more general result, for the class of oriented graphs containing no oriented $4$-cycle. 

Further progress has been achieved for antidirected paths. An \textit{antidirected path} is an oriented path that alternates the direction of its arcs. Klimo\v{s}ová and Stein~\cite{klimosova} showed that every oriented graph $G$ with minimum semidegree at least $3k/4$ contains each antidirected path with $k$ arcs. Their work was later improved by Chen, Hou, and Zhou~\cite{chen}, by reducing the required semidegree to $2k/3$, and further to $5k/8$ by Skokan and Tyomkyn~\cite{skokan}. 
Most recently, Grzesik and Skrzypczyk~\cite{grzesik} proved that 
$\delta^0(G)> \tfrac12\bigl(k-1+\sqrt{k-3}\bigr)$ suffices for every antidirected path of length $k$, matching the conjectured $k/2$ threshold up to an $O(\sqrt{k})$ term.

The next most natural class of oriented paths to consider are perhaps the paths with two blocks. A \textit{block} of an oriented path $P$ is a maximal directed subpath of $P$, and its \textit{size} is the number of its arcs. The containment of paths with two blocks in oriented graphs and digraphs has been previously studied under chromatic number constraints~\cite{chromaticconjecture,el2004paths,sahili2007paths}. El-Sahili~\cite{el2004paths} conjectured that a digraph with chromatic number $k$ contains as a subgraph every $k$-arc oriented path with two blocks. This conjecture was confirmed by Addario-Berry, Havet and Thomass\'e~\cite{chromaticconjecture}. 

In this paper, we study the existence of paths with two blocks in oriented graphs under minimum semidegree conditions. Our main result is the following.

\begin{restatable}{theorem}{function}
	\label{thm:function}
        \label{sec:function}
	Let $k$ and $\ell$ be positive integers with ${k}/{2}\le \ell<k$. Let $G$ be an oriented graph such that 
	\[\delta^0(G)\ge \begin{cases}
		k-\frac{\ell}{2} & \textrm{if $\ell\le \frac{2k}{3}$}, \\
		\frac{2k}{3} & \textrm{if $\ell> \frac{2k}{3}$.}
	\end{cases}\]
	Then $G$ contains as a subgraph both possible $k$-arc oriented paths with two blocks of size $\ell$ and $k-\ell$.   
\end{restatable}

The semidegree function in Theorem~\ref{thm:function} is depicted in Figure~\ref{fig:function}. 
As a direct consequence, we obtain a general bound on $\delta^0(G)$ for the containment of paths with two blocks, irrespective of the size of the blocks. 

\begin{corollary}
    Let $k \ge 2$ be an integer and let $G$ be an oriented graph with $\delta^0(G) \ge {3k}/{4}$.  
	Then $G$ contains as a subgraph every $k$-arc oriented path with two blocks.    
\end{corollary}

\begin{figure}[t]
	\centering
\begin{tikzpicture}
	\begin{axis}[
		axis lines=middle,
		xlabel style={at={(axis description cs:1.1,-0.05)}},
		xlabel={$\ell$},
		ylabel={$\delta^0(G)$},
		ylabel style={at={(axis description cs:-0.1,1.13)}},
		xtick={0.5, 0.66, 1},
		xticklabels={$\frac{k}{2}$, $\frac{2k}{3}$, $k$},
		ytick={0.5, 0.66, 0.75},
		yticklabels={$\frac{k}{2}$, $\frac{2k}{3}$, $\frac{3k}{4}$},
		ymin=0.5, ymax=0.8,
		xmin=0.5, xmax=1,
		enlargelimits=true,
		grid=major
		]
		
		\addplot[blue, thick] coordinates {
			 (0.5,0.75) (0.66,0.66)
		};

		\addplot[blue, thick] coordinates {
			(0.66,0.66) (1,0.66)
		};
		
	\end{axis}
\end{tikzpicture}
\caption{The semidegree function given in Theorem~\ref{thm:function}.}
\label{fig:function}
\end{figure}

\section{Notation}\label{sec:not}
For an oriented graph $G$, $V(G)$ denotes the set of vertices and $A(G)$ the set of arcs in $G$.  Given a vertex $v$ of an oriented graph $G$, let $N^+(v) = \{u \in V(G) : (v,u) \in A(G) \}$ and $N^-(v) = \{u \in V(G): (u,v) \in A(G)\}$ denote its \textit{out-neighbourhood} and \textit{in-neighbourhood}, respectively. The \textit{out-degree} and \textit{in-degree} of $v$ are 
$\deg^+(v)=|N^+(v)|$ and $\deg^-(v)=|N^-(v)|$, respectively. The \textit{minimum out-degree}
and \textit{minimum in-degree} of $G$ are 
\[\delta^+(G):=\min_{v\in V(G)}\{\deg^+(v)\} \quad \textrm{and}\quad  \delta^-(G):=\min_{v\in V(G)}\{\deg^-(v)\},\]
respectively. 
Additionally, we define the \textit{minimum semidegree} of $G$ as $\delta^0(G):=\min \{\delta^+(G),\delta^-(G)\}$. 
For $S \subseteq V(G)$, $G[S]$ denotes the subgraph of $G$ induced by $S$. A directed cycle $C$ in $G$ is said to be a \textit{Hamilton cycle} if $V(C)=V(G)$.  

The \textit{length} of a path $P$, denoted by $\len(P)$, is defined as the number of its arcs. 
Given a directed path $P=u_1u_2\ldots u_p$, its \textit{reverse} is the path  
$\overleftarrow{P}=u_pu_{p-1}\ldots u_1$. (Here we emphasize that as oriented graphs $P$ and $\protect\overleftarrow{P}$ are exactly the same. The only difference is in the order of vertices that we use to represent the path.) For two vertices $u_i$ and $u_j$ in $P$, with $i<j$, let $u_i\xrightarrow{P}u_j$ be the subpath of $P$ given by $u_iu_{i+1}\ldots u_j$. 
Given two directed paths $P_1$ and $P_2$ such that their initial vertices coincide, we write $\protect\overleftarrow{P_1}P_2$ to denote the concatenation of $\protect\overleftarrow{P_1}$ and $P_2$. See an example in Figure~\ref{fig:concatenation}. 
We denote by $P(\overleftarrow{r},\protect\overrightarrow{s})$ the $(r+s)$-arc path with two blocks, where the first block consists of $r$ backward arcs followed by a second block of $s$ forward arcs. Similarly, the notation $P(\protect\overrightarrow{r},\protect\overleftarrow{s})$ represents the reverse configuration. See Figure~\ref{fig:two-blocks} for two examples. 

\begin{figure}[t]
	\centering
	\includegraphics[width=0.35\textwidth]{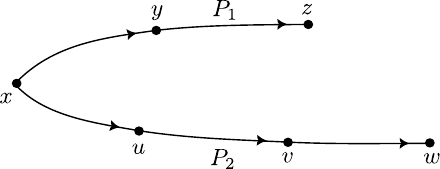}
	\caption{Concatenation $\protect\overleftarrow{P_1}P_2$, where $P_1 = xyz$ and $P_2=xuvw$. }
	\label{fig:concatenation}
\end{figure}

\begin{figure}[t]
    \centering
    \includegraphics[width=0.45\textwidth]{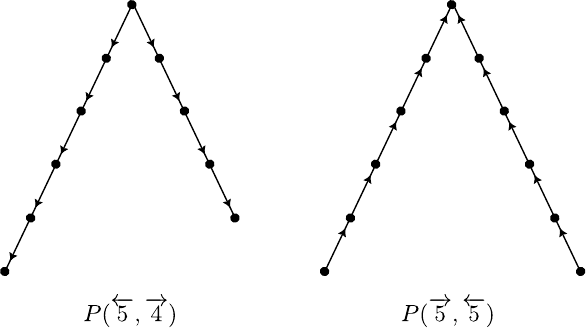}
    \caption{Examples of paths with two blocks. On the left, $P(\protect\overleftarrow{5},\protect\overrightarrow{4})$ consists of a first block of 5 backward arcs followed by a second block of 4 forward arcs. On the right, $P(\protect\overrightarrow{5},\protect\overleftarrow{5})$ consists of a first block of 5 forward arcs followed by a second block of 5 backward arcs. }
    \label{fig:two-blocks}
\end{figure}
Given a path $P$ and an oriented graph $G$, an {\it embedding} from $P$ to $G$ is an injective 
function $f: V(P) \to V(G)$ preserving adjacencies, that is, for each arc $(u,v)$ of $P$, we have that $(f(u),f(v))$ is an arc of $G$.
If such an embedding exists, then we say that $P$ \textit{embeds} in $G$. In this paper, we interchangeably say that a path $P$ embeds in $G$ or that $G$ contains the path $P$. 

\section{Preliminary results}

We first state a theorem of Jackson~\cite{jackson} on directed paths which will be a useful tool in the proof of Theorem~\ref{thm:function}.  

\begin{theorem}[Jackson~\cite{jackson}]\label{thm:jackson}
    Every oriented graph $G$ contains a directed path on $2\delta^0(G)$ arcs.  
\end{theorem}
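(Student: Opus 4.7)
The plan is to argue by contradiction: take a longest directed path $P = v_0 v_1 \ldots v_p$ in $G$, write $d := \delta^0(G)$, and assume $p < 2d$. Since $P$ cannot be extended at either end, $N^+(v_p) \subseteq V(P) \setminus \{v_p\}$ and $N^-(v_0) \subseteq V(P) \setminus \{v_0\}$, yielding two sets of size at least $d$ inside the $(p+1)$-set $V(P)$. The overall argument proceeds by induction on $|V(G)|$.

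First I would dispose of the subcase in which the arc $v_p \to v_0$ is present. Then $G[V(P)]$ carries the Hamilton directed cycle $v_0 v_1 \ldots v_p v_0$, and any vertex $u \notin V(P)$ with a neighbor in $V(P)$ would allow one to rotate the cycle to end (or start) at that neighbor and attach $u$, producing a directed path of length $p + 1$ and contradicting the maximality of $P$. Hence every vertex outside $V(P)$ has all of its $\geq 2d$ distinct neighbors outside $V(P)$, so $\delta^0(G - V(P)) \geq d$; the induction hypothesis then supplies a directed path of length $\geq 2d$ in $G - V(P)$, the desired contradiction. The base case $V(G) = V(P)$ is immediate: since $G$ is oriented with minimum semidegree $d$, each vertex has $2d$ distinct in- and out-neighbors, forcing $n \geq 2d + 1$ and hence $p \geq 2d$.

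In the remaining subcase $v_p \not\to v_0$, both $N^+(v_p)$ and $N^-(v_0)$ live in $V(P) \setminus \{v_0, v_p\}$, a set of size $p - 1 \leq 2d - 2$. Inclusion--exclusion then gives $|N^+(v_p) \cap N^-(v_0)| \geq 2d - (p-1) \geq 2$, so there is an internal vertex $v_i$ with $0 < i < p$ satisfying $v_p \to v_i$ and $v_i \to v_0$. I would then form the rotated longest path $P' := v_{i+1} v_{i+2} \ldots v_p v_i v_0 v_1 \ldots v_{i-1}$ on the same vertex set, now with endpoints $v_{i+1}$ and $v_{i-1}$, and iterate the dichotomy on $P'$.

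The main obstacle is closing the argument in this second subcase: each rotation either falls into the Hamilton-cycle subcase already treated, or exposes further back-arcs inside $G[V(P)]$. Since there are only finitely many ordered endpoint pairs realized by longest paths on the fixed vertex set $V(P)$, a careful bookkeeping of these pairs and of the pair of arcs each rotation contributes should show that the accumulated arcs in $G[V(P)]$ eventually contain a Hamilton directed cycle on $V(P)$, at which point the first subcase applies and closes the contradiction.
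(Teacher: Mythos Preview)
The paper does not give its own proof of this statement: Jackson's theorem is quoted as a preliminary result and used as a black box. So there is nothing in-paper to compare against, and your proposal must stand on its own.

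Your Case~1 is fine. The gap is precisely where you flag it, and the suggested fix does not work. From $P = v_0 v_1 \cdots v_p$ you pivot on $v_i$ (with $v_p \to v_i$ and $v_i \to v_0$) to obtain $P' = v_{i+1} \cdots v_p\, v_i\, v_0 \cdots v_{i-1}$; but now $v_{i-1} \to v_i$ and $v_i \to v_{i+1}$ are arcs of the original path, so $v_i$ is again a legal pivot for $P'$, and pivoting on it returns $P$. Hence the process can oscillate $P \to P' \to P \to \cdots$ indefinitely: the set of realised endpoint pairs need not grow, and no new arcs are ``accumulated'' beyond $v_p \to v_i$, $v_i \to v_0$, $v_{i-1} \to v_i$, $v_i \to v_{i+1}$ together with the arcs of $P$, which certainly need not contain a Hamilton directed cycle on $V(P)$. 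Knowing that at least two pivots exist at each step does not rescue the argument either, since nothing rules out longer cycles among the rotated paths, and you have exhibited no potential that strictly increases under some choice of rotation. Closing this case requires a genuine additional idea, which the proposal does not supply; as written it is a plausible opening move but not a proof.
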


\noindent
In the following proposition we gather some properties of $G$ that ensure an embedding of $P(\protect\overleftarrow{\ell},\protect\overrightarrow{k-\ell})$.

\begin{proposition}
	\label{prop:special_configurations}
Let $k$ and $\ell$ be integers with ${k}/{2}\le \ell < k$. Let $G$ be an oriented graph with $\delta^0(G)\ge k-\ell$. Let $P=v_0v_1\ldots v_t$ be a directed path of maximum length in $G$, and further suppose that $t\ge 2\ell$. Define the sets
\[X=\{v_0,\ldots,v_{k-\ell-1}\}, \quad Y= \{v_{k-\ell},\ldots,v_{t-k+\ell}\} \quad \textrm{and} \quad Z=\{v_{t-k+\ell+1},\ldots,v_t\}.\]
Suppose one of the following conditions holds:
\begin{enumerate}[label=(\roman*)]
	\item $N^-(v_0)\cap Y\ne \emptyset$ or $N^+(v_t)\cap Y\ne \emptyset$;
	\item $N^+(v_{i-1})\not\subseteq V(P)$ for some $v_{i}\in N^+(v_t)\cap (X\setminus \{v_0\})$; 
	\item $G[V(P)]$ contains a Hamilton cycle.
\end{enumerate}
Then $P(\protect\overleftarrow{\ell},\protect\overrightarrow{k-\ell})$ embeds in $G$. 
\end{proposition}
\begin{proof}
Observe that $X$, $Y$ and $Z$ are non-empty and pairwise disjoint. Moreover, $|X| = |Z| = k-\ell$. 
	We proceed by cases according to the proposition. 
	\begin{enumerate}[label=($\roman*)$]
		\item First, suppose that $N^-(v_0)\cap Y\ne \emptyset$, and let $v_i\in N^-(v_0)\cap Y$. We define the paths
		\[P_1:=v_iv_0\xrightarrow{P}v_{i-1} \quad \textrm{and }\quad P_2:=v_i\xrightarrow{P}v_t.\]
		See Figure~\ref{fig:empty_intersection} (left). We have $\len(P_1)=i$ and $\len(P_2)=t-i$. Moreover, since 
		$X\subsetneq V(P_1)$ and $Z\subsetneq V(P_2)$, it follows that $\len(P_1),\len(P_2)\ge k-\ell$. 
        
		Next, we show that at least one of $P_1$ and $P_2$ has length at least $\ell$. Note that this readily implies that $P(\protect\overleftarrow{\ell},\protect\overrightarrow{k-\ell})$ is a subpath 
		of $\protect\overleftarrow{P_1}P_2$, and thus embeds in $G$. 
		 Observe that if $i\ge \ell$, then $\len(P_1)=i\ge \ell$, whereas if $i< \ell$, then $\len(P_2)=t-i> t-\ell\ge \ell$ (because $t\ge 2\ell$). 
		
		Now, suppose that $N^+(v_t)\cap Y\ne \emptyset$, and let $v_j\in N^+(v_t)\cap Y$. By the maximality of $P$, we have that $N^-(v_0) \subseteq V(P) \setminus \{v_0\}$, and by the previous case, we may assume
		that $N^-(v_0)\cap Y= \emptyset$. It follows that $N^-(v_0) \subseteq (X \setminus \{v_0\}) \cup Z$. Since $\delta^0(G)\ge k-\ell=|X| > |X \setminus \{v_0\}|$, we must have $N^-(v_0)\cap Z\ne \emptyset$. Let $v_i\in N^-(v_0)\cap Z$. We define the paths
		\[P_1:=v_iv_0\xrightarrow{P}v_{j-1} \quad \textrm{and }\quad P_2:=v_i\xrightarrow{P}v_tv_j\xrightarrow{P}v_{i-1}.\]
		See Figure~\ref{fig:empty_intersection} (right). 
		We have $\len(P_1)=j$ and $\len(P_2)=t-j$. Again, as we have $X\subsetneq V(P_1)$ and 
		$Z\subsetneq V(P_2)$, it follows that $\len(P_1),\len(P_2)\ge k-\ell$. Furthermore, if $j\ge \ell$, then $\len(P_1)\ge \ell$, and if $j<\ell$, then $\len(P_2)> t-\ell\ge \ell$. 
		Thus, $P(\protect\overleftarrow{\ell},\protect\overrightarrow{k-\ell})$ is a subpath of $\protect\overleftarrow{P_1}P_2$.

		\begin{figure}[t]
		\centering
		\includegraphics[width=0.85\textwidth]{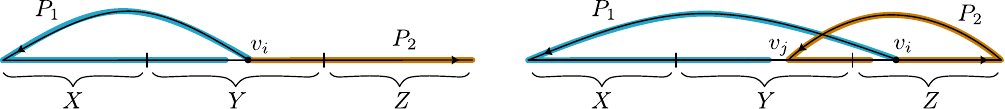}
		\caption{Construction of paths $P_1$ and $P_2$ in the proof of Proposition~\ref{prop:special_configurations}~$(i)$.}
		\label{fig:empty_intersection}
		\end{figure} 
		
		\item Fix a vertex $v_i\in N^+(v_t)\cap (X\setminus \{v_0\})$ such that $N^+(v_{i-1})\not\subseteq V(P)$. Let $w_0 \in N^+(v_{i-1}) \setminus V(P)$, and among all the directed paths in $G[V(G) \setminus V(P)]$ starting at $w_0$, choose a directed path $P' = w_0  \ldots w_m$ of maximum length.  
		Define the cycle $C := v_i \xrightarrow{P} v_t v_i$.  
		Since $v_i\in X$ and $t\ge 2\ell\ge k$, it follows that 
		\[|C| > |Y \cup Z| = |V(P)|-|X| = (t+1)-(k-\ell) \geq \ell+1.  \] 
  		Suppose first that $m\ge k-\ell-1$, and consider the following paths: 
		\[P_1:=v_{i-1}w_0\xrightarrow{P'}w_m \quad \textrm{and} \quad P_2:=v_{i-1}\xrightarrow{P}v_t.\]
		We have $\len(P_1)= m+1\ge k-\ell$ and $\len(P_2)= |C|> \ell$. Thus,  $P(\protect\overleftarrow{\ell},\protect\overrightarrow{k-\ell})$ is a subpath of
		$\protect\overleftarrow{P_1}P_2$. 
		
		Suppose now that $m \leq k-\ell-2$. By the maximality of $P'$, we have that \[N^+(w_m)\subseteq V(P)\cup (V(P') \setminus \{w_m\}).\] We distinguish two cases: 
		\begin{itemize}
		\item Suppose that $N^+(w_m)\cap V(C)\ne \emptyset$, and let $v_r\in N^+(w_m)\cap V(C)$. Define
		
        \[P'':=\begin{cases}
        v_0\xrightarrow{P}v_{i-1}w_0\xrightarrow{P'}w_mv_r\xrightarrow{P}v_tv_i\xrightarrow{P}v_{r-1} & \textrm{ if $r\neq i$},\\
        v_0\xrightarrow{P}v_{i-1}w_0\xrightarrow{P'}w_mv_i\xrightarrow{P}v_t & \textrm{ if  
        $r=i$.}
        \end{cases}
        \]
		\begin{figure}[t]
			\centering
            \includegraphics[width=0.4\textwidth]{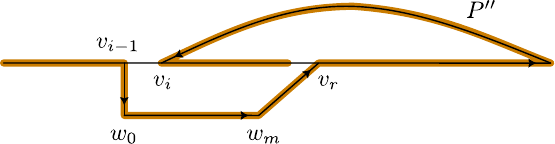}
            \caption{Construction of $P''$ in the proof of Proposition~\ref{prop:special_configurations}(ii) for the case $N^+(w_m)\cap V(C)\neq\emptyset$ (subcase $r\neq i$).}
            \label{fig:longer-path}
		\end{figure} 
		See Figure~\ref{fig:longer-path} for the subcase $r\neq i$. Since $\len(P'') = \len(P)+m+1 > \len(P)$, this contradicts the maximality of $P$.
		
		\item Suppose that $N^+(w_m)\cap V(C)= \emptyset$, so that $N^+(w_m)\subseteq (V(P)\setminus V(C))\cup (V(P') \setminus \{w_m\})$. 
		Since \[|V(P') \setminus \{w_m\}|=m\le k-\ell-2 \leq \delta^0(G)-2,\]
        it follows that $w_m$ has at least two out-neighbors in $V(P) \setminus V(C) = \{v_0,\dots,v_{i-1}\}$. Consequently, we have that $i \geq 2$, and that $w_m$ has at least one out-neighbor in $\{v_0,\dots,v_{i-2}\}$. Let $j\in \{0,\ldots,i-2\}$ be the smallest index such that $v_j\in N^+(w_m)$. We then define the paths \[P_1:=v_{i-1}w_0\xrightarrow{P'}w_mv_j\xrightarrow{P}v_{i-2} \quad \textrm{and} \quad P_2:=v_{i-1}\xrightarrow{P}v_t.\] Then $\{w_m\} \cup N^+(w_m) \subseteq V(P_1)$, and consequently, $\len(P_1) = |V(P_1)|-1 \geq |N^+(w_m)| \geq \delta^0(G) \geq k-\ell$. On the other hand, $\len(P_2) = |V(C)| > \ell$. Thus,  $P(\protect\overleftarrow{\ell},\protect\overrightarrow{k-\ell})$ embeds in $G$. 
	\end{itemize}
	
	\item Without loss of generality, suppose the Hamilton cycle is $v_0v_1\ldots v_t v_0$; if not, we may relabel the vertices in $P$. 
	Then, all the in- and out-neighbours of $v_0$ must belong to $V(P)$, for otherwise we could find a path in $G$ longer than  $P$, contradicting its maximality. Thus, $N^+(v_0) \cup N^-(v_0) \subseteq V(P) \setminus \{v_0\} = (X \setminus \{v_0\}) \cup Y \cup Z$. So, since $|N^+(v_0) \cup N^-(v_0)| \geq 2\delta^0(G) \geq 2(k-\ell) > |X \setminus \{v_0\}|+|Z|$, it follows that $(N^+(v_0)\cup N^-(v_0))\cap Y\ne \emptyset$. Let $v_i\in (N^+(v_0)\cup N^-(v_0))\cap Y$. By case $(i)$, we may assume that  $v_i\in N^+(v_0)$. Define the paths
	\[P_1:=
		v_0\xrightarrow{P}v_{i-1} 
	\quad \textrm{and} \quad 
	P_2:=
		v_0v_i\xrightarrow{P}v_{t}.	
	\]
	See Figure~\ref{fig:spanning-cycle}. 	
	
	Note that $X\subsetneq V(P_1)$ and $Z\subsetneq V(P_2)$, 
	which implies that $\len(P_1),\len(P_2)\ge k-\ell$. Moreover, at least one of $P_1$ and $P_2$ has length at least $t/2\ge \ell$. 
	Thus, $P(\protect\overleftarrow{\ell},\protect\overrightarrow{k-\ell})$  is a subpath of $\protect\overleftarrow{P_1}P_2$, and therefore embeds in $G$. 
	\end{enumerate}
\end{proof}

\begin{figure}[h]
		\centering
		\includegraphics[width=0.4\textwidth]{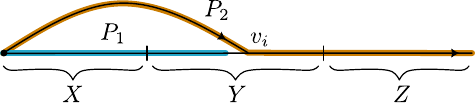}
		\caption{Construction of paths $P_1$ and $P_2$ in the proof of Proposition~\ref{prop:special_configurations}~$(iii)$.}
		\label{fig:spanning-cycle}
	\end{figure}

\section{Proof of Theorem~1}

Note that it is enough to show that $P(\protect\overleftarrow{\ell},\protect\overrightarrow{k-\ell})$
embeds in $G$, as the embedding of $P(\protect\overrightarrow{\ell},\protect\overleftarrow{k-\ell})$ within $G$ follows by embedding 
$P(\protect\overleftarrow{\ell},\protect\overrightarrow{k-\ell})$ into the oriented graph obtained by reversing each arc in $G$.

Consider a directed path $P=v_0v_1\ldots v_t$ of maximum length in $G$. 
By Theorem~\ref{thm:jackson} we have $t\ge 2\delta^0(G)$. 
We proceed by cases. 
\begin{itemize}
	\item Suppose that $\boldsymbol{k/2\le \ell \le {2k}/{3}}$. Here we have $\delta^0(G) \geq k-\ell/2$, and consequently, $t \geq 2\delta^0(G) \geq 2k-\ell$. It follows that 
	\begin{equation}
		\label{eq:diameter}
		\delta^0(G)> k-\ell \quad \textrm{and} \quad  t-\ell\ge 2k-2\ell \ge \frac{2k}{3}\ge \ell, 
	\end{equation}
which in particular implies that $t \geq 2\ell$. Define the sets 
	\[X=\{v_0,\ldots,v_{k-\ell-1}\}, \quad Y= \{v_{k-\ell},\ldots,v_{t-k+\ell}\} \quad \textrm{and}\quad  Z=\{v_{t-k+\ell+1},\ldots,v_t\}.\]
	Note that with this set-up, if any one of the conditions $(i)$, $(ii)$ and $(iii)$ from Proposition~\ref{prop:special_configurations} holds, then $P(\protect\overleftarrow{\ell},\protect\overrightarrow{k-\ell})$  embeds in $G$, and we are done. 

We have that 
	\[|X|=|Z|=k-\ell \quad \textrm{and} \quad|Y|=t+1-2(k-\ell)\ge (2k-\ell)+1-2(k-\ell) = \ell+1. \]
	By the maximality of $P$, we have $N^-(v_0), N^+(v_t)\subseteq V(P)$. 
	Note that if $N^-(v_0)\cap Y\ne \emptyset$ or $N^+(v_t)\cap Y \ne \emptyset$, then Proposition~\ref{prop:special_configurations}~$(i)$ guarantees that $P(\protect\overleftarrow{\ell},\protect\overrightarrow{k-\ell})$  embeds in $G$. Moreover, if $v_0\in N^+(v_t)$ (equivalently: $v_t \in N^-(v_0)$), then $v_0v_1 \dots v_tv_0$ is a Hamilton cycle of $G[V(P)]$, and so by Proposition~\ref{prop:special_configurations}~$(iii)$, we again obtain that  $P(\protect\overleftarrow{\ell},\protect\overrightarrow{k-\ell})$  embeds in $G$. Thus, we may assume that 
	\begin{equation}
		\label{eq:neighbourhoods}
		N^-(v_0)\subseteq  (X\setminus \{v_0\})\cup (Z \setminus \{v_t\}) \quad \textrm{and} \quad N^+(v_t)\subseteq (X \setminus \{v_0\}) \cup (Z\setminus \{v_t\}).
	\end{equation}
	
    Choose $i^*\in \{1,\ldots,k-\lceil 3\ell/2 \rceil\}$ such that $v_{i^*}\in N^+(v_t)$. If no such vertex exists, then 
	by~\eqref{eq:neighbourhoods}, we must have
	\[N^+(v_t)\subseteq \{v_{k-\lceil {3\ell}/{2} \rceil+1},\ldots,v_{k-\ell-1}\}\cup (Z\setminus \{v_t\}),\] 
	which implies that 
	\[\deg^+(v_t)\le k-\Big\lfloor \frac{\ell}{2} \Big\rfloor-2< k-\frac{\ell}{2} \leq \delta^0(G), \]
	a contradiction. Note that $v_{i^*}\in X\setminus \{v_0\}$.

	Since $\{v_1,\ldots,v_{k-\lceil 3\ell/2 \rceil}\}\subseteq X$, by Proposition~\ref{prop:special_configurations}~$(ii)$, we may assume that $N^+(v_{i^*-1})\subseteq V(P)$.  
	We now distinguish two cases. 

	\begin{itemize}
		\item Suppose that $N^+(v_{i^*-1})\cap Y=\emptyset$. Clearly, we have 
		\[|N^-(v_0)\cap X|\le k-\ell-1\quad \textrm{and}\quad |N^+(v_{i^*-1})\cap X|\le k-\ell-1.\] 
		Then, since $\delta^0(G)\ge k-{\ell}/{2}$, and $N^-(v_0)\cup N^+(v_{i^*-1})\subseteq X\cup Z$ by~\eqref{eq:neighbourhoods}, it follows that
		\[|N^+(v_{i^*-1})\cap Z|\ge \frac{\ell}{2}+1> \frac{|Z|}{2} \quad \textrm{and}\quad |N^-(v_0)\cap Z|\ge \frac{\ell}{2}+1> \frac{|Z|}{2}. \] 
		Define the set
		\[S:=\{v_j:v_{j-1}\in N^-(v_0)\cap Z\}.\]
        Clearly, $S \subseteq Z$. Moreover, since $v_t\notin N^-(v_0)$ (by~(\ref{eq:neighbourhoods})), we see that $|S|=|N^-(v_0)\cap Z|$. 
		We now have $|S|,|N^+(v_{i^*-1})\cap Z|> {|Z|}/{2}$,  and so by the pigeonhole principle, we must have $S\cap (N^+(v_{i^*-1})\cap Z)\ne \emptyset$. 
		This ensures the existence of a vertex $v_p\in N^+(v_{i^*-1})\cap Z$ such that $v_{p-1}\in N^-(v_0)\cap Z$. Define 
		\[C:=v_0\xrightarrow{P}v_{i^*-1}v_p\xrightarrow{P}v_tv_{i^*}\xrightarrow{P}v_{p-1}v_0,\]
        (see Figure~\ref{fig:hamilton-theorem}). 
        Then $C$ is a directed cycle with $V(C)=V(P)$, i.e.\ $C$ is a Hamilton cycle of $G[V(P)]$. By Proposition~\ref{prop:special_configurations}~$(iii)$, 
		we conclude that $P(\protect\overleftarrow{\ell},\protect\overrightarrow{k-\ell})$ embeds in $G$.

        \begin{figure}[t]
			\centering
			\includegraphics[width=0.4\textwidth]{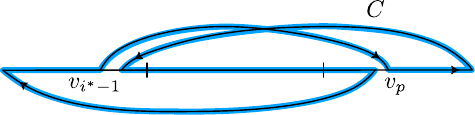}
            \caption{Construction of the Hamilton cycle $C$ in the proof of Theorem~\ref{thm:function} (case $k/2\le \ell \le 2k/3$; subcase $N^+(v_{i^*-1})\cap Y=\emptyset$).}
            \label{fig:hamilton-theorem}
		\end{figure}

		\item Suppose that $N^+(v_{i^*-1})\cap Y\ne \emptyset$, and let $v_p\in  N^+(v_{i^*-1})\cap Y$. 
		\begin{itemize}
			\item Suppose that $p\ge \ell+i^*$. Define the paths
			\[P_1:=v_{i^*-1}v_p \xrightarrow{P} v_{t} \quad \textrm{and} \quad P_2:=v_{i^*-1}\xrightarrow{P} v_{p-1}.\]
            See Figure~\ref{fig:p} (left).  
			Since $\ell+i^*\le p\le t-k+\ell$, we have 
			\[\len(P_1)=t-p+1 > k-\ell \quad \textrm{and} \quad \len(P_2)=p-i^*\ge \ell. \]
			Thus, the path $P(\protect\overleftarrow{\ell},\protect\overrightarrow{k-\ell})$ is a subpath of $\protect\overleftarrow{P_1}P_2$.

			\item Suppose that $p\le \ell+i^*-1$. Choose 
			$j^*\in \{t-k+\ell+1,\ldots,t-\lfloor {\ell}/{2} \rfloor\}$ such that $v_{j^*}\in N^-(v_0)$. The existence of such a vertex $v_{j^*}$
			follows by a similar argument to that of $v_{i^*}$. Indeed, if such a vertex $v_{j^*}$ does not exist, then by~\eqref{eq:neighbourhoods}, we have 
            \[N^-(v_0)\subseteq (X\setminus \{v_0\})\cup \{v_{t-\lfloor \ell/2 \rfloor+1},\dots,v_t\}. \]
            This would imply that \[\deg^-(v_0)\le (k-\ell-1) + \Big\lfloor \frac{\ell}{2} \Big\rfloor = k - \Big\lceil \frac{\ell}{2} \Big\rceil- 1 <\delta^0(G),\]
            a contradiction. 

			Define the paths 
			\[P_1:=v_{j^*}\xrightarrow{P}v_tv_{i^*}\xrightarrow{P} v_{p-1} \quad \textrm{and}\quad 
			P_2:=v_{j^*}v_0\xrightarrow{P} v_{i^*-1}v_{p}\xrightarrow{P} v_{j^*-1}.\] 
            See Figure~\ref{fig:p} (right) for the construction.
            Since $j^*\le t-\lfloor {\ell}/{2} \rfloor$, $i^*\le k-\lceil {3\ell}/{2} \rceil$ and $p\ge k-\ell$ (because $v_p \in Y$), it follows that $t-j^*\ge \lfloor {\ell}/{2} \rfloor$ 
			and $p-i^*\ge \lceil {\ell}/{2} \rceil$. Therefore,
			\[\len(P_1)=t-j^*+p-i^*\ge \ell.\]
			Moreover, since $j^*\ge t-k+\ell+1$, $p\le \ell+i^*-1$ and $t\ge 2\delta^0(G)\ge 2k-\ell$, we obtain
			\[\len(P_2)=i^*+j^*-p\ge i^*+t-k+\ell+1-\ell-i^*+1=t-k+2\ge k-\ell+2.\]
			Thus, the path $P(\protect\overleftarrow{\ell},\protect\overrightarrow{k-\ell})$ is a subpath of $\protect\overleftarrow{P_1}P_2$. 

            \begin{figure}[b]
  \centering
  \hspace{0.05\textwidth}\includegraphics[width=0.4\textwidth]{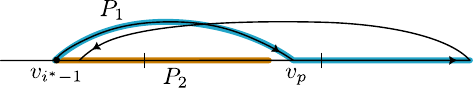}\hfill
  \includegraphics[width=0.4\textwidth]{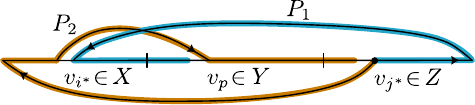}\hspace{0.05\textwidth}
  \caption{Construction of paths $P_1$ and $P_2$ in the proof of Theorem~\ref{thm:function} when $k/2\le \ell \le 2k/3$ and there exists $v_p\in N^+(v_{i^*-1})\cap Y$).}
  \label{fig:p}
\end{figure}
		\end{itemize}

	\end{itemize}
	
	\item Suppose that $\boldsymbol{\ell> 2k/3}$. Here we have $\delta^0(G) \geq 2k/3$, and consequently, $t \geq 2\delta^0(G) \geq 4k/3$.
	Define the sets 
	\[Q:=\{v_{k-\ell},\ldots,v_{t-\ell}\} \quad \textrm{and} \quad R:=\{v_{\ell},\ldots,v_{t-k+\ell}\}.\]
	Note that $|Q| = |R| = t-k+1$. Moreover, note that $Q \cap R \neq \emptyset$ if and only if $\ell \leq t-\ell$, and in this case, we have that $Q \cup R = \{v_{k-\ell},\ldots,v_{t-k+\ell}\}$. 

First, suppose that $N^-(v_0)\cap (Q\cup R)=\emptyset$. 
	Since $N^-(v_0)\subseteq V(P)$ by the maximality of $P$, it follows that $N^-(v_0)\subseteq V(P)\setminus (Q\cup R)$. Then, keeping in mind that $t \geq 2\delta^0(G) \geq 4k/3$ and $\ell > 2k/3$, we obtain 
	\[
	\deg^-(v_0) \le 
	\begin{cases}
		t - 2(t-k+1)  = 2k - t - 2 < 2k/3 & \text{if } Q \cap R = \emptyset, \\
		2(k-\ell)-1 < 2k/3 & \text{if } Q \cap R \neq \emptyset,
	\end{cases}
	\]
	a contradiction to $\delta^0(G) \ge 2k/3$. Thus, we may assume that $N^-(v_0)\cap (Q\cup R)\ne \emptyset$. 
    
		Let $v_i\in N^-(v_0)\cap (Q\cup R)$. Define the paths
		\[P_1:=	v_i v_0 \xrightarrow{P} v_{i-1}  \quad \textrm{and }\quad P_2:=v_i\xrightarrow{P} v_t. \]
		See Figure~\ref{fig:last-case}. 
		We have $\len(P_1)=i$ and $\len(P_2)=t-i$. Then,
		\begin{itemize}
			\item if $v_i\in Q$, then $\len(P_1)\ge k-\ell$ and $\len(P_2)\ge \ell$; and,
			\item if $v_i\in R$, then $\len(P_1)\ge \ell$ and $\len(P_2)\ge k-\ell$. 
		\end{itemize}
		Thus, in both cases, 
		$P(\protect\overleftarrow{\ell},\protect\overrightarrow{k-\ell})$  is a subpath of $\protect\overleftarrow{P_1}P_2$. 
		\begin{figure}[t]
			\centering
			\includegraphics[width=0.85\textwidth]{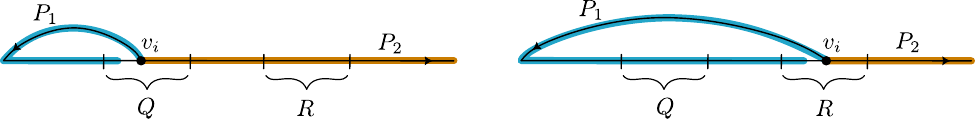}
			\caption{Construction of paths $P_1$ and $P_2$ when $\ell>{2k}/{3}$.}
			\label{fig:last-case}
		\end{figure}
\end{itemize}

\section{Concluding remarks}
As noted in~\cite{klimosova}, the $(k/2)$-blow-up of a directed cycle $C_\ell$ witnesses the tightness of Conjecture~\ref{conj:stein} for the antidirected orientation. In fact, it excludes a broader and natural family of orientations that we call \emph{bouncing paths}. 
For an oriented path $P=p_0,\dots,p_k$, define its \emph{height function} $h:\{0,\dots,k\}\to\mathbb{Z}$ by setting $h_0=0$ and, for $0\le i<k$,
set $h_{i+1}=h_i+1$ when $p_i\to p_{i+1}$, and $h_{i+1}=h_i-1$ when $p_{i+1}\to p_i$. 
We call $P$ \emph{bouncing} if $h_i\in\{-1,0,1\}$ for all $0\le i\le k$; in particular, $h_j=0$ for every even index $j$.
Equivalently, $P$ is bouncing if for every $i$ with $0\le i\le (k/2)-1$, the two consecutive arcs 
$p_{2i}p_{2i+1}$ and $p_{2i+1}p_{2i+2}$ have opposite directions. (In particular, every antidirected path is bouncing.)
For even $k$, every bouncing path $P$ has $k/2+1$ vertices of height $0$ (at positions $0,2,\dots,k$). Any embedding of such a path into the $(k/2)$-blow-up of a directed cycle would force all these vertices to lie in the same vertex class, which is impossible. Hence, the blow-up construction excludes precisely the bouncing paths when $k$ is even.

In a more general setting, Stein's conjecture is near-tight for every orientation: for an even $k$ take a regular tournament $T$ on $k+1$ vertices. We have $\delta^{0}(T) = k/2$ and, by a theorem of Havet and Thomass\'e~\cite{havetthomasse2000} (proved first for large $k$ by Thomason~\cite{thomason1986paths}), $T$ contains every oriented path with $k$ arcs (with three small exceptions all involving the antidirected path), but clearly $T$ does not contain any path with $k+1$ arcs. 
Thus we wonder if Conjecture~\ref{conj:stein} can be strengthened for all orientations except the bouncing paths of even length $k$.

\begin{question}
\label{question:unique}
Is it true that every oriented graph $G$ with $\delta^0(G)\ge k/2$ contains every orientation of the $k$-edge path, except for the bouncing paths when $k$ is even?
\end{question}

\noindent
Note that Jackson's theorem (Theorem~\ref{thm:jackson}) gives an affirmative answer for the directed orientation.








\bibliographystyle{abbrv}
\bibliography{biblio} 

\end{document}